\newcommand{\la}{\lambda}
\newcommand{\al}{\alpha}
\newcommand{\fy}{\varphi}
\newcommand{\p}{\partial}
\newcommand{\I}{\infty}
\newcommand{\ti}{\tilde}
\newcommand{\R}{\mathbb{R}}
\newcommand{\Z}{\mathbb{Z}}
\newcommand{\F}{\mathcal{F}}
\renewcommand{\S}{\mathcal{S}}
\renewcommand{\hat}{\widehat}
\numberwithin{equation}{section}
\newtheorem{thm}{Theorem}
\newtheorem{lem}[thm]{Lemma}
\theoremstyle{remark}
\newtheorem{rem}{Remark}
\newtheorem{defn}{Note}
\newcommand{\ran}{\rangle}
\newcommand{\lan}{\langle}
\newcommand{\lec}{\lesssim}
\newcommand{\EQ}[1]{\begin{equation} \begin{split} #1 \end{split} \end{equation}}
\newcommand{\Del}[1]{}
\newcommand{\CAS}[1]{\begin{cases} #1 \end{cases}}
\newcommand{\pt}{&}
\newcommand{\pr}{\\ &}
\newcommand{\pq}{\quad}
\newcommand{\pn}{}
\newcommand{\LR}[1]{{\lan #1 \ran}}
\newcommand{\de}{\delta}
\newcommand{\ta}{\tau}
\renewcommand{\t}{\tau}
\newcommand{\x}{\xi}
\newcommand{\na}{\nabla}
\newcommand{\supp}{\operatorname{supp}}
\newcommand{\De}{\Delta}
\begin{document}
\title{Global well-posedness and scattering for Skyrme wave maps}

\author{Dan-Andrei Geba, Kenji Nakanishi, and Sarada G. Rajeev}

\address{Department of Mathematics, University of Rochester, Rochester, NY 14627, U.S.A.}
\email{dangeba@math.rochester.edu}
\address{Department of Mathematics, Kyoto University, Kyoto 606-8502, Japan}
\email{n-kenji@math.kyoto-u.ac.jp}
\address{Department of Physics and Astronomy, Department of Mathematics, University of Rochester, Rochester, NY 14627, U.S.A.}
\email{rajeev@pas.rochester.edu}
\date{}

\begin{abstract}
We study equivariant  solutions for two models (\cite{S1}-\cite{S3}, \cite{AN}) arising in high energy physics, which are generalizations of the wave maps theory (i.e., the classical nonlinear $\sigma$ model) in $3+1$ dimensions. We prove global existence and scattering for small initial data in critical Sobolev-Besov spaces. 
\end{abstract}

\subjclass[2000]{35L70, 81T13}
\keywords{Wave maps, Skyrme model, Adkins-Nappi model, global existence, scattering theory.}

\maketitle
\section{Introduction}
The classical nonlinear $\sigma$ model of nuclear physics (\cite{GL}, \cite{Gr1}, \cite{Gr2}), which describes interactions between nucleons and $\pi$ mesons, is characterized by the action
\EQ{\label{wm}S\,=\, \frac 12 \int g^{\mu\nu}S_{\mu\nu}\,dg\,=\,\frac 12 \int  g^{\mu\nu} \,\partial_\mu\phi^i\, \partial_\nu\phi^j\,
h_{ij}(\phi)\,dg,}
where $\phi :(\mathbb{R}^{1+3},g)\to(\mathbb{S}^3,h)$ is a map from the Minkowski spacetime, with $g= \text{diag}(-1,1,1,1)$, into the unit sphere of $\mathbb{R}^{4}$, endowed with the induced Riemannian metric, and $S_{\mu\nu}$ is the pullback metric corresponding to $\phi$. A priori, solutions for the associated Euler-Lagrange system (named also \emph{wave maps}) have the following energy norm conserved:  
\begin{equation}
E(\phi)\,=\,\frac 12 \,\int_{\mathbb{R}^n}\,|\phi_t|^2_h\,+\,|\nabla_x \phi|^2_h\ dx.
\label{en}
\end{equation}

One of the most interesting questions with respect to the wave maps system is whether smooth initial data of finite energy evolve into global regular solutions. Shatah \cite{S} gave a negative answer to this question by constructing a blow-up solution (later found by Turok and Spergel \cite{TS} in closed-form formula) in the category of degree-1 equivariant maps, i.e.,
\[
\phi(t,r,\psi, \theta)= (u(t,r),\psi, \theta), \qquad u(t,0)=0,  \qquad u(t,\infty)= \pi,
\]
which is given by taking
\EQ
{\label{ts}u(t,r)=2 \arctan\frac{r}{t}.}
This certifies the physical intuition of the nonlinear $\sigma$ model which says that, due to the attractive nature of the forces between $\pi$ mesons, degree-1 configurations would shrink to a point,  a degree-0 configuration, leading to a singularity formation. 

The models we are investigating, proposed by Skyrme \cite{S1}-\cite{S3}, respectively Adkins and Nappi \cite{AN}, are generalizations of the wave maps theory, which try to prevent the possible breakdown of the system in finite time. This is achieved, for example, in the case of the Adkins-Nappi model, by introducing a short range repulsion among the $\pi$ mesons, which is, in turn, accomplished by making the $\pi$ mesons interact with an $\omega$ vector meson. 

Therefore, from a mathematical point of view, one can predict for these problems that finite energy regular configurations are global in time (for a general discussion of this aspect, see \cite{MR2376667}). In proving such a claim, the standard approach has two major independent steps: the energy doesn't concentrate and small energy implies global regularity. Motivated also by the profile of blow-up solutions for wave maps, we focus our attention to degree-1 equivariant maps corresponding to the two models. In this case, non-concentration of energy has been established for the $3+1$ dimensional Adkins-Nappi model \cite{GR1, GR2}\footnote{Using similar methods, energy non-concentration has also been established for a $2+1$ dimensional Skyrme model \cite{GS}, while current work in progress \cite{GR3} investigates it for the problem in $3+1$ dimensions.}. In this article, we address the global issue.
 
We proceed next to introduce:
\subsection{The Skyrme model}
This theory is described by the Lagrangian
\EQ{ L\,=\,-\frac 12\, g^{\mu\nu}S_{\mu\nu} \,+\, \frac{\alpha^2}{4} \left(S^{\mu\nu} S_{\mu\nu}\,-\,(g^{\mu\nu}S_{\mu\nu})^2\right),}
where $S_{\mu\nu}$ is defined as in \eqref{wm} and $\alpha$ is a constant having the dimension of length. A degree-1 equivariant ansatz leads to the following nonlinear wave equation satisfied by the angular variable $u$:
\EQ{\label{sk} \left(1+\frac{2\al^2\sin^2u}{r^2}\right)(u_{tt}-u_{rr})-\frac{2}{r}u_r+\frac{\sin2u}{r^2}\left[1+\al^2\left(u_t^2-u_r^2+\frac{\sin^2u}{r^2}\right)\right]=0.}
The energy associated with \eqref{sk} is given by
\EQ{\mathcal{E}[u](t)=\int_0^\infty \left[\left(1 + \frac{2 \alpha^2\sin ^2 u}{r^2}\right)\frac{u_t^2+u_r^2}{2}+
\frac{\sin ^2 u}{r^2} +\frac{\alpha^2\sin^4 u}{2r^4}
\right]\,r^2 dr.\label{es}}

In recent work addressing the stability of the static solution, Bizon et al. \cite{BCR}  provided numerical support  for a finite energy global regularity claim. Also, at the time of our submission, we have learned of independent work of Li \cite{L}, which establishes global well-posedness for large $H^4$ data.
   
\subsection{The Adkins-Nappi model}
As mentioned before, this theory describes the interaction of $\pi$ mesons, represented by the classical nonlinear $\sigma$ model, with an $\omega$ meson,  which comes in the form of a gauge field $A=A_\mu dx^\mu$. The action of this model is 
\EQ{S\,=\, \int \left(\frac{1}{2}g^{\mu\nu}S_{\mu\nu}\,+\,\frac{1}{4} F^{\mu\nu} F_{\mu\nu} \right)\,dg\,-\, \int A_\mu j^\mu \, dt\,dx,} 
where $F_{\mu\nu}\,=\,\partial_\mu A_\nu - \partial_\nu A_\mu$ is the associated electromagnetic field, and $j$ is the baryonic current
\EQ{j^\mu\,=\,c\, \epsilon^{\mu\nu\rho\sigma}\,
\partial_\nu\phi^i \,\partial_\rho\phi^j \,\partial_\sigma\phi^k
\,\epsilon_{ijk},}
with $\epsilon$ being the Levi-Civita symbol and $c$ a normalizing constant. The equivariance assumption for $\phi$ and $A$ (see \cite{GR1}) yields, after scaling out some constant,
\EQ{ \label{an}
 u_{tt}-u_{rr}-\frac{2}{r}u_r+\frac{\sin 2u}{r^2}+\frac{(u-\sin u\cos u)(1-\cos 2u)}{r^4} =0,}
for which the conserved energy is 
\EQ{\mathcal{E}[u](t)=\int_0^\infty \left[\frac{u_t^2+u_r^2}{2}+
\frac{\sin ^2 u}{r^2} +\frac{(u-\sin u \cos u)^2}{2r^4}
\right]\,r^2 dr. \label{ean}}

We make now the following important
\begin{rem} (\cite{GR1}, \cite{GR3}) Finite energy smooth solutions of \eqref{sk} and \eqref{an} are uniformly bounded with
\EQ{\label{li} \|u\|_{L^\infty_{t,x}} \leq C(\mathcal{E}[u](0)),}
where $C(s)\to 0$ as $s\to 0$.
\label{rli}
\end{rem}

\section{Preliminaries}
We start this section by discussing what are the natural candidates for spaces where we should study the global well-posedness of the two problems. Usually, for equations that have a scaling associated to them, this is predicted by norms which leave the size of the initial data invariant. 

Both of our equations are not  scale-invariant. However, if we take advantage of Remark \ref{rli} and impose size restrictions on $u$, we can write formal asymptotics for certain expressions and obtain the following scale-invariant approximations for \eqref{sk}, respectively \eqref{an}:
\EQ{\label{ssk} \left(1+\frac{2\al^2u^2}{r^2}\right)(u_{tt}-u_{rr})-\frac{2}{r}u_r+\frac{2u}{r^2}\left[1+\al^2\left(u_t^2-u_r^2+\frac{u^2}{r^2}\right)\right]=0,}
\EQ{\label{san} 
u_{tt}-u_{rr}-\frac{2}{r}u_r+\frac{2u}{r^2}+\frac{u^5}{r^4} =0.}

A simple scale analysis given by 
\EQ{u_{\la}(t,r) = \la\, u(\frac t\la, \frac r\la), \qquad \|u_\la(0)\|_{\dot H^{5/2}(\R^3)} = \|u(0)\|_{\dot H^{5/2}(\R^3)},}
for \eqref{ssk} and
\EQ{u_{\la}(t,r) = \la^{1/2}\, u(\frac t\la, \frac r\la), \qquad \|u_\la(0)\|_{\dot H^2(\R^3)} = \|u(0)\|_{\dot H^2(\R^3)},}
for \eqref{san}, suggests a small data global well-posedness result in $\dot H^{5/2}(\R^3)$ for the Skyrme model and $\dot H^2(\R^3)$ for the Adkins-Nappi theory, where the homogeneous Sobolev norms are defined using the Fourier transform:
\[
\|u\|_{\dot H^s(\R^n)}\,=\,\||\xi|^s\,\hat{u}(\xi)\|_{L^2_\xi(\R^n)}.\]

We make the remark here that the energy in \eqref{es} and \eqref{ean} are bounded in $\dot H^{7/4} \cap \dot H^1(\R^3)$, respectively in $\dot H^{5/3} \cap \dot H^1(\R^3)$, hence the above argument suggests that both equations are supercritical with respect to the energy.

Next, we use the classical substitution $u=rv$ in order to transform \eqref{sk} and \eqref{an} into semilinear wave equations for $v$ on $\R^{5+1}$, which will be our main object of study from this point on.  Thus, \eqref{sk} becomes  
\EQ{ \label{skv}
 \pt v_{tt}-v_{rr}-\frac{4}{r}v_r + h_1(r,u)\,v^3 + h_2(r,u)\,v^5 + h_3(r,u)\,v^3v_r + h_4(r,u)\,v(v_t^2-v_r^2)=0,}
where 
\EQ{
  \pt h_1(r,u)=\frac{\sin2u - 2u}{(1+\frac{2\al^2\sin^2u}{r^2})\,u^3},
 \qquad h_2(r,u)=\frac{\al^2\sin2u\, (\sin^2u-u^2)}{(1+\frac{2\al^2\sin^2u}{r^2})\,u^5},
 \pr h_3(r,u)=\frac{4\al^2\sin u\,(\sin u - u\cos u)}{(1+\frac{2\al^2\sin^2u}{r^2})\,u^3}, 
 \pq h_4(r,u)=\frac{\al^2 \sin 2u}{(1+\frac{2\al^2\sin^2u}{r^2})\,u},}
while \eqref{an} changes to
\EQ{ \label{anv}
 v_{tt}-v_{rr}-\frac{4}{r}v_r+h_5(u)v^3+h_6(u)v^5=0,}
with 
\EQ{
 \pt h_5(u)=\frac{\sin 2u - 2u}{u^3},
 \qquad h_6(u)=\frac{(u-\sin u\cos u)(1-\cos 2u)}{u^5}.}

We can now state our main results that address the small data global well-posedness and scattering for both \eqref{skv} and \eqref{anv}.

\begin{thm}
There exists $\de>0$ such that for any radial intial data $(v(0,r),\p_t v(0,r))$ decaying as $r\to\I$ and satisfying 
\EQ{
 \|\p v(0,\cdot)\|_{\dot B^{3/2}_{2,1}\cap L^2(\R^5)}\le\de,}
the equation \eqref{skv} admits a unique global solution $v$ satisfying 
\EQ{
 \p v \in C(\R;\dot B^{3/2}_{2,1}\cap L^2(\R^5))\cap L^2(\R;\dot B^{3/4}_{4,1}\cap\dot B^{- 3/4}_{4,2}(\R^5))}
and for some $v_\pm$ solving the free wave equation, 
\EQ{
 \|\p(v-v_\pm)(t)\|_{\dot B^{3/2}_{2,1}\cap L^2(\R^5)} \to 0 \pq \text{as}\pq t\to\pm\I\, .}
\label{tsk}
\end{thm}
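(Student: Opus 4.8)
The plan is to set up a contraction mapping in the Strichartz-type space $X = \{v : \p v \in C(\R;\dot B^{3/2}_{2,1}\cap L^2(\R^5))\cap L^2(\R;\dot B^{3/4}_{4,1}\cap\dot B^{-3/4}_{4,2}(\R^5))\}$, exactly the space in which the conclusion asserts membership. The linear (free) wave equation on $\R^{5+1}$ obeys the energy estimate $\|\p v\|_{L^\infty_t(\dot B^{3/2}_{2,1}\cap L^2)} \lesssim \|\p v(0)\|_{\dot B^{3/2}_{2,1}\cap L^2}$ together with the dispersive/Strichartz bound placing $\p v$ in $L^2_t(\dot B^{3/4}_{4,1}\cap \dot B^{-3/4}_{4,2})$; the pair $(2,4)$ in dimension $5$ is admissible, and the Besov exponents are chosen so that the regularity in the Strichartz norm matches $\dot B^{3/2}_{2,1}$ under Sobolev embedding (a gain of $5/4$ derivatives from $L^2$ to $L^4$, split as $3/2 = 3/4 + 3/4$). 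Duhamel's formula then reduces the problem to showing that the nonlinear map $N(v) = $ (the sum of all nonlinear terms in \eqref{skv}, integrated against the wave propagator) is a contraction on a small ball of $X$.

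The core of the argument is the multilinear estimates for each nonlinear term. First I would treat the coefficients $h_j(r,u)$: using $u = rv$ and Remark~\ref{rli} (which guarantees $\|u\|_{L^\infty_{t,x}}$ is small, controlled by the energy, hence we may take it as small as we like after choosing $\de$ small), the functions $h_j$ are smooth, bounded, and $r$-independent-to-leading-order expressions of $u^2 = r^2v^2$; in particular they are bounded multipliers with bounded derivatives, and the factor $(1+2\al^2\sin^2u/r^2)^{-1}$ is comparable to $1$. So up to harmless lower-order corrections the genuinely nonlinear terms are $v^3$, $v^5$, $v^3 v_r$, and $v(v_t^2 - v_r^2)$ — precisely the model \eqref{ssk}. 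For the cubic term $v^3$ one estimates $\|v^3\|_{L^1_t(\dot B^{3/2}_{2,1})}$ by putting two factors of $v$ in the $L^2_t L^4_x$-type Strichartz norm and one in $L^\infty_t$-energy, using the fractional Leibniz rule in Besov spaces and the Sobolev embedding $\dot B^{3/2}_{2,1}(\R^5)\hookrightarrow L^\infty$ to absorb the low-regularity factor; the quintic term $v^5$ is handled similarly with four Strichartz factors, the scaling again working out since $\R^{5+1}$ with the $\dot B^{3/2}_{2,1}$ norm is critical for \eqref{ssk}. The first-order terms $v^3 v_r$ and $v(v_t^2 - v_r^2)$ each carry one derivative on a factor, which is exactly compensated by measuring that factor in the negative-regularity Strichartz space $\dot B^{-3/4}_{4,2}$ — this is precisely why that second Besov space appears in the definition of $X$. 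Summing, one obtains $\|N(v)\|_X \lesssim \|v\|_X^3 + \|v\|_X^5$ and a corresponding difference estimate, so for $\de$ (hence the $X$-norm of the free solution) sufficiently small the map $v \mapsto (\text{free evolution}) + N(v)$ contracts on a ball of radius $\sim\de$, yielding existence and uniqueness.

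Once the global solution is constructed in $X$, scattering is a soft consequence: writing $\p v(t) = U(t)\p v(0) + \int_0^t U(t-s)F(s)\,ds$ where $F$ is the nonlinearity, the finiteness of $\|F\|_{L^1_t(\dot B^{3/2}_{2,1}\cap L^2)}$ (which is part of the multilinear estimates above) shows that $\p v(t) - U(t)\big(\p v(0) + \int_0^\infty U(-s)F(s)\,ds\big) \to 0$ in $\dot B^{3/2}_{2,1}\cap L^2$ as $t\to+\I$, and symmetrically as $t\to-\I$; the scattering data $v_\pm$ are the free solutions with initial data $\p v(0) + \int_0^{\pm\I} U(-s)F(s)\,ds$.

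I expect the main obstacle to be the first-order terms $h_3 v^3 v_r$ and $h_4 v(v_t^2 - v_r^2)$ in the Skyrme equation: one derivative falls on a factor of $v$, and closing the estimate at the critical regularity $\dot B^{3/2}_{2,1}$ forces one to use the endpoint negative-index Strichartz space $\dot B^{-3/4}_{4,2}$ rather than a more forgiving space, which makes the choice of function space delicate and the fractional-Leibniz bookkeeping in Besov spaces (especially handling the $r$-dependent coefficients $h_3, h_4$, which are genuinely not multipliers but require a paraproduct decomposition) the technical heart of the proof. A secondary subtlety is verifying that the $\dot B^{-3/4}_{4,2}$-component of the solution is actually controlled by the data and propagated — i.e., that the full linear estimate with this two-sided Besov regularity holds — and that the nonlinearity maps back into this space, not merely into the positive-regularity part.
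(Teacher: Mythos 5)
Your overall skeleton (Duhamel plus contraction in a Strichartz-type space, scattering as a soft consequence of an $L^1_t$ bound on the nonlinearity) is the right frame for the cubic term, the quintic term, and even $h_3\,v^3v_r$ — this is essentially how the paper treats those terms, via Besov product estimates, interpolation and Hardy/Sobolev embeddings. But there is a genuine gap at exactly the place you flag as the ``main obstacle'': the term $N_4=h_4(r,u)\,v\,(v_t^2-v_r^2)$. This term sits precisely at the critical regularity of wave maps in $5+1$ dimensions ($v\in\dot B^{5/2}_{2,1}$), and the required trilinear estimate $\|v\,(\p v)^2\|_{L^1_t\dot B^{3/2}_{2,1}}\lesssim(\text{Strichartz norms})$ cannot be closed by admissible Strichartz norms alone: when one derivative factor carries the high frequency and the other derivative factor is at much lower frequency, distributing the $\nu^{3/2}$ weight forces a factor like $\nu^{3/4}\|\p v_\mu\|_{L^4}$ with $\mu\ll\nu$, which is not controlled by $\mu^{3/4}\|\p v_\mu\|_{L^4}$, and no admissible reshuffling of $L^q_tL^r_x$ exponents repairs the loss at the scaling-critical endpoint. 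Measuring a factor in $\dot B^{-3/4}_{4,2}$ does not address this; that component of the solution space is tied to the $L^2$ part of the data (the equation is not scale invariant), not to the critical quadratic-derivative interaction. The paper's proof is built around this point: it exploits the null structure $v_t^2-v_r^2=-\square(v^2/2)+v\square v$ and performs the iteration for $N_4$ not in Strichartz spaces but in Tataru's spaces $F$, $\square F$ (the $X^{1/2}+Y_\la$ construction), using the algebra and null-form estimates \eqref{F} from \cite{T}. Your proposal never uses the null form, so the contraction cannot be completed in the space $X$ you define.

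A second, related omission is the treatment of the coefficient in $N_4$. After expanding, $N_4=(v_t^2-v_r^2)\,\frac{\al^2\sin 2u}{r}\,(1+\frac{2\al^2\sin^2u}{r^2})^{-1}$ with $u=rv$, so the coefficient is a power series in $rv$ and $(\al\sin u/r)^2$ divided by $r$; it is not a harmless bounded multiplier once you work at $\dot B^{3/2}_{2,1}$ (or $F$-space) regularity. Controlling it requires the new ingredient of the paper, Lemma \ref{alg for u}: the weighted algebra estimate $\|rvw\|_Z\lesssim\|v\|_Z\|w\|_Z$ (valid for radial functions, $n\ge5$), proved via the sharp dyadic radial Sobolev inequality $\|r^{(n-1)/2}S_\la(\na)\fy\|_{L^\I}\lesssim\la^{1/2}\|\fy\|_{L^2}$ combined with the $F_\la$ Strichartz bounds, together with the product estimates \eqref{D prod} showing $F\cap|\na|F$ is an algebra. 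This is what lets the authors sum the series for $\sin 2u/r$ and $(1+2\al^2\sin^2u/r^2)^{-1}$ inside $rZ$ and $F\cap|\na|F$ and conclude $\|N_4\|_{\square F\cap|\na|\square F}\lesssim\|v\|_Z^2$. Your proposal gestures at ``paraproduct decomposition'' for the coefficients but gives no mechanism that gains the needed $r^{-1}$ decay; without an estimate of the type \eqref{alg rZ}, radiality and the weight cannot be exploited, and the argument does not close.
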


\begin{thm}
There exists $\de>0$ such that for any radial intial data $(v(0,r),\p_t v(0,r))$ decaying as $r\to\I$ and satisfying 
\EQ{
 \|\p v(0,\cdot)\|_{\dot H^1\cap L^2(\R^5)}\le\de,}
the equation \eqref{anv} admits a unique global solution $v$ satisfying 
\EQ{
 \p v \in C(\R;\dot H^1\cap L^2(\R^5))\cap L^2(\R;\dot B^{1/4}_{4,2}\cap\dot B^{-3/4}_{4,2}(\R^5))}
and for some $v_\pm$ solving the free wave equation, 
\EQ{
 \|\p(v-v_\pm)(t)\|_{\dot H^1\cap L^2(\R^5)} \to 0 \pq \text{as}\pq t\to\pm\I\, .}
 \label{tan}
\end{thm}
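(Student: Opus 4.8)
The plan is to prove Theorem \ref{tan} (and Theorem \ref{tsk} by the same scheme) as a small-data scattering result for the semilinear wave equation \eqref{anv} on $\R^{5+1}$ by a contraction-mapping/perturbative argument built on Strichartz and $L^2$-based estimates adapted to the radial $5d$ d'Alembertian $\p_t^2-\p_r^2-\frac 4r\p_r$. First I would record the dispersive framework: writing $w=rv$ turns the operator into the standard radial wave operator on $\R^5$, so the free evolution obeys the usual $5d$ Strichartz estimates; the admissible pair that drives the nonlinearity is $L^2_t(\dot B^{1/4}_{4,2}\cap\dot B^{-3/4}_{4,2})$, which at the regularity $\p v\in\dot H^1\cap L^2$ gives exactly the gain needed to close. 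The energy-type space $C_t(\dot H^1\cap L^2)$ together with this $L^2_tL^4_x$-type Besov space will form the iteration space $X$; the Besov (rather than Sobolev) summability is used precisely to sum the low and high frequency pieces of the nonlinearity, and to absorb the logarithmic failure of the endpoint.

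The heart of the matter is the nonlinear estimate. The nonlinearity of \eqref{anv} is $h_5(u)v^3+h_6(u)v^5$ with $u=rv$, and by Remark \ref{rli} (which bounds $\|u\|_{L^\infty_{t,x}}$ by a function of the energy tending to $0$) the coefficients $h_5(u),h_6(u)$ are smooth bounded functions of $u$ with all derivatives bounded on the relevant range, so they can be treated as harmless multipliers after a paraproduct/Moser-type decomposition. The key step is therefore a multilinear estimate of the schematic form
\EQ{
 \|v_1 v_2 v_3\|_{L^1_t(\dot H^1\cap L^2)} \lec \prod_{j=1}^3 \|v_j\|_X,
}
together with the analogous quintilinear bound for the $v^5$ term, where one factor is measured in the $L^2_t$-Strichartz norm to produce the $L^1_t$ in time and the remaining factors in $C_t(\dot H^1\cap L^2)$; the $r$-weights hidden in $u=rv$ and in the original $r^{-2},r^{-4}$ potentials must be tracked through the substitution, but in the $v$-formulation they have been absorbed into $h_5,h_6$ and the $\dot B^{-3/4}_{4,2}$ component of $X$ supplies the negative-order room that matches the low-frequency/large-$r$ behavior. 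I would prove these estimates by Littlewood–Paley trichotomy (high-high$\to$low, high-low$\to$high), Bernstein, and Hölder in the Strichartz exponents, using the $\ell^1$/$\ell^2$ Besov sums to handle the two output frequency regimes.

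With the multilinear estimates in hand, the remaining steps are routine: (i) set up the integral (Duhamel) formulation $\p v = \p(\text{free evolution of data}) + \p\,\Box^{-1}(\text{nonlinearity})$ and show the map is a contraction on a small ball of $X$ when $\|\p v(0)\|_{\dot B^{1/2}_{?}}\le\de$ — here $\de$ small, with $C(\mathcal E[u](0))$ from Remark \ref{rli} also small, so the coefficient bounds are uniform; (ii) conclude global existence and uniqueness in $X$, hence $\p v\in C_t(\dot H^1\cap L^2)$; (iii) extract the scattering states $v_\pm$ by showing $t\mapsto \mathcal U(-t)\p v(t)$ is Cauchy in $\dot H^1\cap L^2$ as $t\to\pm\I$, which follows from the convergence of the Duhamel integral in that norm — a direct consequence of the $L^1_t(\dot H^1\cap L^2)$ bound on the nonlinearity. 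For \eqref{skv} the same argument applies with the regularity shifted up by $1/2$ (data in $\dot B^{3/2}_{2,1}\cap L^2$, Strichartz space $L^2_t(\dot B^{3/4}_{4,1}\cap\dot B^{-3/4}_{4,1})$), with the additional terms $h_3 v^3 v_r$ and $h_4 v(v_t^2-v_r^2)$ handled the same way since in the $5d$ picture one derivative costs exactly the $1/2$ extra regularity the Skyrme scaling provides; the $\ell^1$ Besov summability on the top index is what makes the derivative-nonlinearity terms close without loss. I expect the main obstacle to be the nonlinear estimate at the endpoint: the naive Strichartz pairs fail logarithmically in $5d$ for this scaling, which is exactly why the statement is phrased with Besov spaces $\dot B^{3/4}_{4,1}$, $\dot B^{-3/4}_{4,2}$ rather than Sobolev spaces, and getting the frequency-summation bookkeeping to actually close — especially reconciling the $\ell^1$ and $\ell^2$ summations across the high-high and high-low interactions — is where the real work lies.
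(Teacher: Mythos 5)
Your overall scheme is the same as the paper's: Duhamel plus Strichartz, a contraction with $\p v\in C_t(\dot H^1\cap L^2)\cap L^2_t(\dot B^{1/4}_{4,2}\cap\dot B^{-3/4}_{4,2})$, and the nonlinearity of \eqref{anv} estimated in $L^1_t(\dot H^1\cap L^2)_x$. The gap is precisely at the step you declare harmless: the composed coefficients $h_5(rv),h_6(rv)$ in the $L^1_t\dot H^1_x$ bound. When $\p_r$ falls on the coefficient one gets $h_6'(u)(v+rv_r)v^5$ with an explicit unbounded weight $r$, so "bounded with all derivatives bounded, hence a harmless multiplier after a Moser/paraproduct decomposition" is not a valid reduction for $h(rv)$: no unweighted product estimate in your space sees the factor $r$. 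The paper's mechanism is structural and is the only non-routine part of this proof: by Lemma \ref{hu} (estimates \eqref{123}, \eqref{46}) the derivatives of $\ti h_5,\ti h_6$ decay in $u$, so $u\,h_6'(u)$ is bounded and one rewrites $r\,h_6'(u)v_r v^5=(u h_6'(u))v^4v_r$, while the remaining piece is $(u h_6'(u))v^4\,(v/r)$, handled by Hardy's inequality; after that the estimate closes through the specific embeddings $\dot B^{1/4}_{4,2}\subset L^5$, $\dot H^2\subset\dot B^{3/4}_{4,2}\subset L^{10}$, $\dot H^2\subset\dot H^1_{10/3}$ and the interpolation giving $\dot B^1_{4,1}\subset L^{20}$. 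Your schematic trilinear/quintilinear estimates cover the terms where the derivative hits a factor of $v$, but not this weighted term; to repair it you must either invoke the $\langle u\rangle^{-k}$ decay as the paper does, or control $\|u\|_{L^\infty}=\|rv\|_{L^\infty}$ directly from the iteration space (which is available here since $\dot H^1\cap\dot H^2(\R^3)\subset L^\infty$ for $u$) and still use Hardy for the $v/r$ piece.

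Two further points. Your appeal to Remark \ref{rli} is not legitimate inside the contraction argument: it is a statement about finite-energy solutions of the original equation, whereas the theorem's smallness hypothesis is on critical norms, and in any case the paper does not need it for Theorem \ref{tan} because Lemma \ref{hu} holds for all $u\in\R$. Also, there is no logarithmic endpoint failure in $5{+}1$ dimensions: $(q,r)=(2,4)$ is the Keel--Tao endpoint, valid for $n\ge 4$; the Besov formulation is a convenience, not a fix. Finally, your closing claim that Theorem \ref{tsk} follows "by the same scheme" is incorrect as stated: the critical term $h_4(r,u)\,v(v_t^2-v_r^2)$ is not amenable to Strichartz estimates alone, and the paper resorts to the null-form identity \eqref{q}, Tataru's $F$-spaces, and the new weighted product estimate of Lemma \ref{alg for u}; but that lies outside the statement under review.
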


In both results, $\dot B^{s}_{p,q}(\R^n)$, for $s\in \R$, $1\leq p,q\leq \infty$, denotes the homogeneous Besov space whose norm is defined using a dyadic decomposition, i.e.,
\EQ{\label{B}
u\,=\,\sum_{\lambda\in 2^{\mathbb{Z}}} S_\lambda(\na) u, \qquad \|u\|_{\dot B^{s}_{p,q}(\R^n)}\,=\,\left(\sum_{\lambda\in 2^{\mathbb{Z}}} \left(\lambda^{s}\,\|S_\lambda(\na) u\|_{L^p(\R^n)}\right)^q\right)^{1/q},}
where $S_\la(\na):=\F^{-1}\chi(\la^{-1}\x)\F$ is the Fourier multiplier on $\R^n$, with a fixed radial $\chi\in C_0^\I(\R^n)$ satisfying 
\[
\supp\chi\subset\{1/2<|\x|<2\}, \qquad \sum_{\la\in 2^\Z}\chi(\la^{-1}\x)=1, \ (\forall)\x\not=0.\]

\begin{rem}
We note that the Sobolev norms in the above theorems simply translate to the ones for $u=rv$ on $\R^3$, while the Besov norms should be adjusted. Precisely: 
\EQ{
 \|v\|_{\dot H^s(\R^5)} \sim \|u\|_{\dot H^s(\R^3)}, 
 \pq \|v\|_{\dot B^s_{4,r}(\R^5)} \sim \|r^{-1/2}u\|_{\dot B^s_{4,r}(\R^3)}.}
\end{rem}

\begin{rem}
The global well-posedness in both theorems matches the one predicted by the formal asymptotics. The presence of two regularities in formulation is motivated by the fact that our equations are not scale-invariant and we want to work with homogeneous spaces.
\end{rem}

\begin{defn} In what concerns the notation for norms, from this point on, we usually work first on a fixed time estimate and only in the final stages we account also for the time variable. This will be clear from the context too.
\end{defn}

\section{Main argument}
We start by recording uniform bounds for the coefficients of the nonlinearities appearing in \eqref{skv} and \eqref{anv}. Using the notation 
\EQ{
 \ti h_i(u)\,=\,\CAS{\left(1+\frac{2\al^2\sin^2u}{r^2}\right)\,h_i(r,u), &1\leq i\leq 4, \\ h_i(u) &5\le i\le 6,}}
straightforward computations lead to:
\begin{lem}
$\ti h_i \,(1\leq i\leq 6)$ are all analytic functions of $u\in\R$; also, all are even with the exception of $\ti h_3$, which is odd. Moreover, $\ti h_1=\ti h_5\leq 0$, $\ti h_6\ge0$, and
\EQ{\label{123}|\ti h_1(u)|+ |\p_u^{j}\ti h_3(u)|\lec\LR{u}^{-2},
\pq |\p_u^{1+j}\ti h_1(u)|+ |\p_u^j\ti h_2(u)| \lec\LR{u}^{-3},}
\EQ{\label{46}|\p_u^j\ti h_4(u)|\lec\LR{u}^{-1}, \pq |\p_u^j\ti h_6(u)|\lec\LR{u}^{-4},}
for all $j\ge 0$, with $\LR{u}=(1+u^2)^{1/2}$.\label{hu}
\end{lem}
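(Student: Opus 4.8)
The plan is to verify Lemma~\ref{hu} by examining each $\ti h_i$ in turn, exploiting the fact that the only possible trouble is at $u=0$ (where the denominators $u^k$ vanish) and as $|u|\to\infty$ (where we need the stated decay).

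First I would establish analyticity. Each $\ti h_i$ is, after cancelling the factor $1+\tfrac{2\al^2\sin^2u}{r^2}$, a ratio of the form $P(u,\sin u,\cos u)/u^k$ where the numerator is an entire function of $u$ vanishing to order at least $k$ at the origin. For instance $\sin 2u-2u = -\tfrac{4}{3}u^3+O(u^5)$, so $\ti h_1=\ti h_5=(\sin 2u-2u)/u^3$ extends analytically across $0$ with $\ti h_1(0)=-4/3<0$; similarly $\sin^2 u-u^2 = -\tfrac13 u^4+O(u^6)$ gives analyticity of $\ti h_2$, and $\sin u-u\cos u = \tfrac13 u^3+O(u^5)$ handles $\ti h_3$ (one factor of $\sin u$ supplies the extra zero, and the leading term being odd in $u$ makes $\ti h_3$ odd). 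For $\ti h_6 = (u-\sin u\cos u)(1-\cos 2u)/u^5$ one uses $u-\sin u\cos u = u-\tfrac12\sin 2u = \tfrac23 u^3+O(u^5)$ and $1-\cos 2u = 2u^2+O(u^4)$, so the numerator vanishes to order $5$ and $\ti h_6(0)=4/3>0$. Parity in each case follows by noting that $\sin$ is odd and $\cos$ even, so numerators and denominators have definite parity. The sign claims $\ti h_1\le 0$ and $\ti h_6\ge 0$ globally require a tiny extra argument: for $\ti h_1$ one checks $\sin 2u-2u$ and $u$ have opposite signs for all $u\neq 0$ (an elementary inequality), and for $\ti h_6$ one uses $1-\cos 2u\ge 0$ together with $(u-\sin u\cos u)/u\ge 0$ (again $u-\tfrac12\sin 2u$ has the sign of $u$).

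Next I would prove the decay bounds \eqref{123}--\eqref{46}. The point is that for $|u|\gtrsim 1$ each $\ti h_i$ is $(\text{bounded oscillatory function})\cdot u^{-k}$ with $k$ equal to $2,3,2,1,2,4$ respectively, and differentiating in $u$ does not worsen the decay: $\p_u(u^{-k}) = -k u^{-k-1}$ decays faster, and differentiating the bounded trigonometric numerator keeps it bounded, while the chain of product-rule terms is a finite sum of things each $\lec \LR{u}^{-k}$. Combined with analyticity (hence boundedness of $\ti h_i$ and all its derivatives on the compact region $|u|\le 1$), this yields $|\p_u^j\ti h_i(u)|\lec\LR{u}^{-k_i}$ for all $j\ge 0$ on all of $\R$. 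The slightly sharper statement $|\p_u^{1+j}\ti h_1|\lec\LR{u}^{-3}$ comes from the explicit form: $\p_u\ti h_1 = \p_u\big((\sin 2u-2u)u^{-3}\big) = (2\cos 2u-2)u^{-3} - 3(\sin 2u-2u)u^{-4}$; the first term is $O(u^{-3})$ because $\cos 2u-1 = O(u^2)$ near $0$ and $O(1)$ at infinity with the $u^{-3}$ factor, and the second is $O(u^{-3})$ since $\sin 2u-2u = O(u)$ at infinity — so one gains a full power upon differentiating.

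I do not expect any genuine obstacle here; the lemma is exactly the ``straightforward computations'' the text advertises. The only mildly delicate points are bookkeeping ones: (i) organizing the Taylor expansions at $u=0$ so that the order of vanishing of each numerator is seen to match the power $u^k$ in the denominator, which is cleanest if one writes $\sin u, \cos u$ via their power series and tracks only the lowest-order term; and (ii) making the ``differentiation does not destroy decay'' claim precise, for which the tidiest route is to observe that $u\mapsto \ti h_i(u)$ and $u\mapsto \LR{u}^{k_i}\ti h_i(u)$ are both bounded analytic on $\R$ with all derivatives bounded — equivalently, that $\ti h_i$ extends to a bounded holomorphic function on a strip and decays like $|u|^{-k_i}$ along the real axis, so Cauchy estimates give the derivative bounds for free. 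Either way the argument is routine and I would present it compactly, treating $\ti h_1=\ti h_5$ in full detail as the model case and indicating the obvious modifications for the others.
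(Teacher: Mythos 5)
Your proposal is correct and coincides with the paper's intended argument: the paper offers no proof beyond the remark that ``straightforward computations'' give the lemma, and your Taylor expansions at $u=0$ (for analyticity, parity, and the signs of $\ti h_1=\ti h_5$ and $\ti h_6$), together with writing each $\ti h_i$ for $|u|\gtrsim 1$ as a bounded trigonometric factor times a negative power of $u$ so that differentiation never worsens the decay --- including the extra power gained by the first derivative of $\ti h_1$ --- are exactly those computations. Nothing further is needed.
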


We prove first Theorem \ref{tan}, as the structure of the nonlinearities for \eqref{anv} is considerably simpler than the one for \eqref{skv}, needing only a standard Strichartz analysis. 

\subsection{Proof of Theorem \ref{tan}}
As we are using Strichartz estimates for the free wave equation on $\R^{5+1}$, it is sufficient to bound the nonlinearities in $L^1_t(\dot H^1\cap L^2)_x$. We begin by estimating the $L^1L^2$ norm. Relying on the Sobolev embeddings 
\EQ{\dot B^{1/4}_{4,2} \subset L^5,\pq \dot H^2 \subset \dot B^{3/4}_{4,2} \subset L^{10},}
we obtain
\EQ{\pt \|v^3\|_{L^1L^2} \lec \|v\|_{L^\infty L^{10}}\, \|v\|^2_{L^2L^5}\lec \|v\|_{L^\I \dot H^2} \,\|v\|_{L^2 \dot B^{1/4}_{4,2}}^2,
\pr \|v^5\|_{L^1L^2} \lec \|v\|^3_{L^\infty L^{10}}\, \|v\|^2_{L^2L^{10}}\lec \|v\|^3_{L^\I \dot H^2} \,\|v\|_{L^2 \dot B^{3/4}_{4,2}}^2,}
which, combined with the uniform bounds of Lemma \ref{hu} for $\ti h_5$ and $\ti h_6$, prove that $h_5(u)\,v^3$ and $h_6(u)\,v^5$ are bounded in $L^1L^2$.

Next, we treat the $L^1\dot H^1$ norms. For the main term, $h_6(u)v^5$, we have
\EQ{
 (h_6(u)v^5)_r=v^4v_r\,[5\,h_6(u)+u\,h_6'(u)]+v^4\frac{v}{r}\,[u\,h_6'(u)],}
which, based on \eqref{46} and Hardy's inequality, leads to
\EQ{
 \|h_6(rv)v^5\|_{\dot H^1_x} 
 \pt\lec \|v^4\|_{L^{5}}(\|v_r\|_{L^{10/3}}+\|v/r\|_{L^{10/3}}) 
 \pn\lec \|v\|_{L^{20}}^4 \|v_r\|_{L^{10/3}}.}
Using the Sobolev embeddings $\dot H^2 \subset \dot H^1_{10/3}\subset \dot B^{3/4}_{4,\I}$ and $\dot B^1_{4,1}\subset L^{20}$, together with the real interpolation $(\dot B^{3/4}_{4,\I},\dot B^{5/4}_{4,\I})_{1/2,1}=\dot B^1_{4,1}$, we deduce first
\EQ{
 \|v\|_{L^{20}} \lec \|v_r\|_{L^{10/3}}^{1/2}\|v\|_{\dot B^{5/4}_{4,\I}}^{1/2},}
which further implies
\EQ{
 \|h_6(rv)v^5\|_{L^1 \dot H^1} \pt\lec \|v_r\|_{L^\I L^{10/3}}^{3}\|v\|_{L^2\dot B^{5/4}_{4,\I}}^2 
 \pn\lec \|v\|_{L^\I \dot H^2}^3 \|v\|_{L^2 \dot B^{5/4}_{4,2}}^2.}

In what concerns the subcritical term $h_5(u)v^3$,  we proceed as above to derive the fixed time estimate
\EQ{
 \|h_5(u)v^3\|_{\dot H^1} \lec \|v^2\|_{L^{4}}\|v_r\|_{L^4} \lec \|v\|_{L^{10}}\|v\|_{L^{20/3}}\|v_r\|_{L^4}.}
Sobolev embeddings (e.g., $ \dot H^{7/4} \subset L^{20/3}$) allow us then to conclude that
\EQ{
 \|h_5(u)v^3\|_{L^1 \dot H^1} \lec \|v\|_{L^2 \dot B^{3/4}_{4,2}}\|v\|_{L^{\infty} \dot H^{7/4}}\|v\|_{L^2\dot B^1_{4,2}}.}

The rest of the proof is nothing but a  standard fixed point argument, in which one applies Strichartz estimates to the Duhamel formula. 

\subsection{Proof of Theorem \ref{tsk}}
In this case, the nonlinear terms are estimated in $L^1_t(\dot B^{3/2}_{2,1}\cap L^2)_x$ and we start by investigating the $L^1L^2$ norm. The cubic and quintic terms can be dispensed with immediately by the previous proof, as $|h_i(r,u)|\leq |h_i(u)|$. Also, using $|h_3|\lec 1/|u|$ (due to \eqref{123})  
and Hardy's inequality, we deduce
\EQ{ \|h_3(r,u)v^3v_r\|_{L^2} \lec \|v\|_{L^\I}\,\|v_r\|^2_{L^4},}
which, based on the Sobolev embedding $\dot B^{5/2}_{2,1}\subset L^\I$, gives
\EQ{\|h_3(r,u)v^3v_r\|_{L^1L^2} \lec \|v\|_{L^\I \dot B^{5/2}_{2,1}}\,\|\p v\|^2_{L^2 \dot B^0_{4,2}}.}
The last term is treated identically.

The most intricate part of this article is the analysis of  the $L^1\dot B^{3/2}_{2,1}$ norms. This is mainly due to the nonlinearities involving derivatives of $v$, which require a finer argument using spaces that can handle low regularity for high dimensional wave maps.

We notice first that the cubic and quintic terms can be treated simultaneously as we control the $\|v\|_{L^{\infty}_x}$ norm through $\dot B^{5/2}_{2,1}\subset L^\I$. Next, taking advantage of the $L^1\dot H^1$ analysis done in Theorem \ref{tan} and the real interpolation $(\dot H^1,\dot H^2)_{1/2,1}=\dot B^{3/2}_{2,1}$, we reduce this analysis to the study of 
\EQ{\left\| \p_r \left( \left(1+\frac{2\al^2\sin^2u}{r^2}\right)^{-1} \right) h_1(u)\, v^3 \right\|_{L^1L^2}
\pq\text{and}\pq \|h_1(r,u) v^3\|_{L^1\dot H^2}.} 
For the $L^1L^2$ norm, we use the elementary inequality
\EQ{\label{sin}
 \left|\frac{\sin u}{r}\right|^j \lec 1+\frac{2\al^2\sin^2u}{r^2}, \pq (\forall)\,0\leq j \leq 2,}
together with \eqref{123}, to derive
\EQ{\left|\p_r \left( \left(1+\frac{2\al^2\sin^2u}{r^2}\right)^{-1}\right) h_1(u) v^3\right| \lec \frac{v^2}{r}|v_r|  + (1+ |v|) \frac{v^2}{r^2},}
which implies
\EQ{\left\| \p_r \left( \left(1+\frac{2\al^2\sin^2u}{r^2}\right)^{-1} \right) h_1(u)\, v^3 \right\|_{L^1L^2}  \lec \left(1+\|v\|_{L^\I \dot B^{5/2}_{2,1}}\right)\,\|\p v\|^2_{L^2 \dot B^0_{4,2}}.} 
For the $L^1\dot H^2$ norm, a similar argument yields 
\EQ{|\p^2_{r}(h_1(r,u) v^3)| \lec (1+v^2)\left(\frac{v^2}{r^2}+\frac{v^2}{r}|v_r|+|v|v_r^2\right) + (1+|v|)v^2|v_{rr}|.}
Only the last term is not covered by previous estimates, being estimated as
\EQ{\|(1+|v|)v^2 v_{rr}\|_{L^1L^2} \pt \lec (1+\|v\|_{L^\I \dot B^{5/2}_{2,1}})\|v\|^2_{L^2L^{20}}\|v_{rr}\|_{L^\I L^{5/2}}
\pr \lec (1+\|v\|_{L^\I\dot B^{5/2}_{2,1}})\|v\|^2_{L^2\dot B^1_{4,2}}\|\p v\|_{L^\I \dot B^{3/2}_{2,1}},} 
which concludes the discussion of the cubic and quintic nonlinearities.

In what concerns the last two terms, we can no longer use the same approach because, in estimating their $L^1\dot H^2$ norm, we would have to deal with $v_{rrr}$, for which we do not have good bounds. For $h_3(r,u)v^3v_r$, we estimate its $L^1\dot B^{3/2}_{2,1}$ norm directly. We derive first  
\EQ{
 \|v^3v_r\|_{\dot B^{3/2}_{2,1}} \lec \|v\|_{L^\I}^3 \|v_r\|_{\dot B^{3/2}_{2,1}} + \|v^3\|_{\dot B^{3/2}_{10/3,1}}\|v_r\|_{L^5},}
which, based on the Sobolev embeddings $\dot B^{3/2}_{2,1} \subset L^5$ and $\dot B^{5/3}_{3,1}\subset \dot B^{3/2}_{10/3,1} \subset L^\I$,
leads to
\EQ{
 \|v^3v_r\|_{\dot B^{3/2}_{2,1}} \lec \|v\|^3_{\dot B^{5/3}_{3,1}}\, \|\p v\|_{\dot B^{3/2}_{2,1}}.}
Finally, we employ interpolation to deduce 
\EQ{ \|v\|_{\dot B^{5/3}_{3,1}} \lec \|v\|_{\dot B^{2}_{3,1}} + \|v\|_{\dot B^{1/2}_{3,2}}\lec \|v\|_{\dot B^{5/2}_{2,1}}^{1/3}\,\|v\|_{\dot B^{7/4}_{4,1}}^{2/3}+ \|v\|_{\dot H^{1}}^{1/3}\,\|v\|_{\dot B^{1/4}_{4,2}}^{2/3},}
allowing us to conclude
\EQ{
 \|v^3v_r\|_{L^1 \dot B^{3/2}_{2,1}}
 \lec  \|v\|_{L^\I_t(\dot B^{5/2}_{2,1}\cap\dot H^{1})_x}^2\, \|v\|_{L^2_t(\dot B^{7/4}_{4,1}\cap\dot B^{1/4}_{4,2})_x}^2.}
We claim that the coefficient $h_3(r,u)$ doesn't complicate things inside the norm and leave the details for the interested reader. 

All which is left to analyze is the critical term 
\EQ{
 N_4(r,v) := h_4(r,rv)\,v\,(v_t^2-v_r^2),} 
which distinguishes itself from the others by the presence of the null form,
\EQ{\label{q}
 Q(v,v)\,=\,v_t^2-v_r^2=-\square(v^2/2)+v\square v,}
where $\square=-\p^2_{t}+\p^2_{r}+\frac{4}{r}\p_r$ is the radial wave operator. We are exactly at the critical level of regularity for wave maps in $5+1$ dimensions (i.e., $\dot B^{5/2}_{2,1}$), and so we need to work with spaces which take into account \eqref{q}. 

Following Tataru \cite{T}, we introduce the space $F$ for $v$ as a function of $(t,x)\in\R^{n+1}$. Let $\chi\in C^\I(\R)$ be a smooth cutoff satisfying
\EQ{
 \supp\chi\subset(1/2,2) \pq \text{and} \pq \sum_{\la\in 2^\Z}\chi(\la^{-1}s)=1, \ (\forall) s\neq 0.}
For each $\la\in 2^\Z$, define $A_\la(D)$ and $B_\la(D)$ to be the Fourier multipliers in spacetime given by 
\EQ{
 A_\la(D)=\F^{-1}\,\chi(\la^{-1}|(\t,\x)|)\,\F, 
 \pq B_\la(D)=\F^{-1}\,\chi(\la^{-1}|\t^2-|\x|^2|/|(\t,\x)|)\,\F,}
where $\F$ denotes the Fourier transform in $(t,x)\in\R^{n+1}$. $A_\la$ and $B_\la$ are smooth projections to the spacetime frequencies situated at distance $\la$ from $(0,0)$, respectively the light cone $|\t|=|\x|$. Consider also 
\EQ{
 \ti B_\la(D)=\sum_{j\ge -4} B_{2^{-j}\la}(D).}
The function space $F$ on $\R^{n+1}$ is defined by the norm 
\EQ{
 \pt \|u\|_F = \sum_{\la\in 2^\Z} \la^{n/2}\|A_\la(D)u\|_{F_\la}, 
 \pq F_\la=X^{1/2}+Y_\la,
 \pr \|u\|_{X^b}=\sum_{\mu\in 2^\Z} \mu^{b}\|B_\mu(D)u\|_{L^2_{t,x}},
 \pq \|u\|_{Y_\la}=\|u\|_{L^\I L^2} + \la^{-1}\|\square u\|_{L^1L^2},}
with $\square=-\p_t^2+\De_x$. In \cite{T}, for $n\ge 4$, the following estimates were proved: 
\EQ{\label{F}
 \pt \|\p v\|_{L^\I \dot H^{n/2-1}} \lec \|\p v(0)\|_{\dot H^{n/2-1}}+\|\square v\|_{\square F}, 
 \pr \|v w\|_{F} \lec \|v\|_F \|w\|_F,
 \pq \|v w\|_{\square F} \lec \|v\|_F \|w\|_{\square F}, 
 \pr \|v_t^2-|\na v|^2\|_{\square F} \lec \|v\|_F^2,
}
where $\square F \subset L^1\dot B^{n/2-1}_{2,1}$ is defined by 
\EQ{
 \|u\|_{\square F} = \sum_{\la\in 2^\Z} \la^{n/2}\|A_\la(D)u\|_{\square F_\la},
 \pq \square F_\la=\la(X^{-1/2}+L^1L^2).}

In order to obtain $v\in L^\I_t(\dot H^{n/2}\cap \dot H^{n/2-1})_x$, we perform the iteration argument in the function space 
\EQ{ \label{def Z}
  Z:=\{v \in \S'(\R^{n+1})\mid v=v(t,x)=v(t,r),\ \|v\|_{F\cap|\na|F}<\I\},}
where $|\na|F$ is defined by 
\EQ{
 \|u\|_{|\na|F} = \sum_{\la\in 2^\Z} \la^{n/2-1}\|A_\la(D)u\|_{F_\la}.}
Since three of the nonlinear terms have already been estimated in $L^1_t(\dot H^{n/2-1}\cap\dot H^{n/2-2})_x$, only a discussion of  $N_4$ remains. More precisely, we need 
\EQ{\label{Z}
 \|N_4\|_{\square F \cap |\na|\square F} \lec \|v\|_Z^2,}
for small $v\in Z$. To take advantage of \eqref{F}, we can rewrite $N_4$ as:
\EQ{ \label{exp N_4}
 N_4\pt=(v_t^2-v_r^2)\,\frac{\al^2\sin 2u}{r}\,\frac{1}{1+\frac{2\al^2\sin^2u}{r^2}}
 \pr=(v_t^2-v_r^2)\,\frac{\al^2}{r}\,\sum_{j=0}^\I\frac{(-1)^j(2u)^{2j+1}}{(2j+1)!}\,\sum_{k=0}^\I (-2)^k\left(\frac{\al \sin u}{r}\right)^{2k},}
for small $\|v\|_{L^\I}$. Also, we will need the following

\begin{lem} \label{alg for u}
 If $n\ge 5$, then for any radial functions $v=v(t,r)$ and $w=w(t,r)$, 
\EQ{ \label{alg rZ}
 \|r v w\|_Z \lec \|v\|_Z \|w\|_Z.}  
 
 For $n\ge 4$ and general functions, $v=v(t,x)$ and $w=w(t,x)$, 
\EQ{ \label{D prod}
 \|vw\|_{|\na|F} \lec \|v\|_{|\na|F}\|w\|_F,
 \qquad \|vw\|_{|\na|\square F} \lec \|v\|_{|\na|F}\|w\|_{\square F}.}
 \end{lem}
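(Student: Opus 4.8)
The plan is to prove the general estimates \eqref{D prod} first, and then to deduce the radial estimate \eqref{alg rZ} from them, using the commutator structure of $\square$ and Strauss's radial Sobolev inequality.

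For \eqref{D prod} I would re-run Tataru's proofs of the algebra property $\|vw\|_F\lec\|v\|_F\|w\|_F$ and of $\|vw\|_{\square F}\lec\|v\|_F\|w\|_{\square F}$ from \eqref{F}, with the only change that the output piece at spacetime frequency $\la$ is weighted by $\la^{n/2-1}$ rather than $\la^{n/2}$, and that the factor carrying the derivative deficit is measured in $|\na|F$ (resp.\ $|\na|\square F$). Decomposing $vw=\sum_{\mu,\nu}(A_\mu v)(A_\nu w)$ into the high--low, low--high and high--high regimes one checks: in the high--low regime the output weight $\la^{n/2-1}$ already matches the $|\na|F$-weight on the high factor; in the low--high regime the output is \emph{cheaper} than $F$ by $\la^{-1}$, while the low frequency of the other factor is summed against its $F$-norm at a cost $\lec\mu/\la\ll1$; and in the high--high regime the bilinear bound already used by Tataru produces, after inserting the new weights, a summable factor $(\la/\mu)^{n/2-1}$, convergence being exactly the condition $n/2-1>0$, i.e.\ $n\ge4$. (As a by-product this shows $Z=F\cap|\na|F$ is a Banach algebra.)

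Granting \eqref{D prod}, I would prove \eqref{alg rZ} by estimating $r\,vw$ directly, keeping $v$ and $w$ apart: multiplication by $r=|x|$ is not bounded on $Z$, so the estimate must extract decay from one of the two factors to absorb the growth of $r$. The two nontrivial components of $\|r\,vw\|_Z$ are the $L^1L^2$-bound on $\square(r\,vw)$ entering the $Y_\la$ norm, and the $X^{1/2}$-bound. For the first I would use, for radial functions, the identities
\[
 \square(r\,g)=r\,\square g+2\,\p_r g+\tfrac{n-1}{r}\,g,\pq
 \square(vw)=(\square v)\,w+v\,\square w-2\,Q(v,w),\pq Q(v,w)=v_tw_t-v_rw_r,
\]
which split $\square(r\,vw)$ into: a term $r\,Q(v,w)$, which is $r$ times a \emph{null form}, controlled by an $r$-weighted version of the null-form estimate in \eqref{F}; the terms $r\,(\square v)\,w$ and $r\,v\,\square w$, in which the forcing of one factor pairs with the other through the $\square F$-algebra \eqref{F} and \eqref{D prod}; and the lower-order terms $\p_r(vw)$ and $\tfrac{n-1}{r}\,vw$, absorbed by Hardy's inequality. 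In each of these the weight $r$ is converted into decay by the radial Sobolev inequality $\||x|^{(n-2)/2}w\|_{L^\infty_x}\lec\|w\|_{\dot H^1}$ applied to the undifferentiated factor; the surviving power $r\cdot r^{-(n-2)/2}=r^{-(n-4)/2}$ is then summable over dyadic spatial scales precisely when $n\ge5$, which is where the dimensional hypothesis is used. The $X^{1/2}$-component, and the $|\na|\square F$-versions, are handled by the same devices.

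The step I expect to be the main obstacle is exactly this $r$-weighted analysis in the $X^{1/2}$ norm: the weight $r=|x|$ lives in physical space, whereas the $F$-norm is built from cutoffs in spacetime frequency and in distance to the light cone, so one must check that multiplying a radial function by $r$ interacts acceptably with the operators $B_\mu(D)$ and the $X^{b}$ structure, not only with the $L^\infty L^2$ and $\square$ pieces. Once Lemma~\ref{alg for u} is available, the remaining claim \eqref{Z} for $N_4$ follows by expanding $N_4$ as in \eqref{exp N_4}, estimating the null form $v_t^2-v_r^2$ in $\square F\cap|\na|\square F$ via \eqref{F} and \eqref{D prod}, peeling off the powers of $r$ in $\tfrac{(2u)^{2j+1}}{r}\big(\tfrac{\al\sin u}{r}\big)^{2k}$ one at a time by \eqref{alg rZ}---each peeling consuming one power of $\|v\|_Z$, since $N_4$ retains an undifferentiated factor $v$---and summing the series in $j,k$, which converges for $\|v\|_Z$ small by Lemma~\ref{hu} and \eqref{sin}.
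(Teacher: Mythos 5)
Your treatment of \eqref{D prod} is essentially the paper's: the authors also reduce to the high--high interaction $\mu\sim\la$ (the other regimes being the unweighted estimates of Tataru with the extra weight trivially transferred), and the summation over output frequencies $\nu\lec\mu$ converges for $n\ge4$, so that part of the proposal is fine in outline. The genuine gap is in \eqref{alg rZ}, and it is exactly the point you yourself flag as ``the main obstacle'' and then leave unresolved: how the physical-space weight $r$ interacts with the modulation-localized $X^{1/2}$ structure. The paper's solution has two ingredients your argument is missing. First, since $v,w$ are radial one may replace $r\,vw$ by the vector $x\,vw$; multiplication by the coordinate functions $x_j$ acts as $\na_\xi$ on the spacetime Fourier side and therefore does \emph{not} enlarge the Fourier support, so for the near-cone piece $v_\mu w_\la^{<\mu}$ the $X^{1/2}$ bound \eqref{x1} follows from a plain $L^2_{t,x}$ bound on $xv_\mu w_\la^{<\mu}$. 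No commutator identity for $\square(r\,\cdot)$ is needed; indeed your Leibniz expansion $\square(rvw)=r(\square v)w+\dots$ does not mesh with the $Y_\la$ requirement $\la^{-1}\|\square(\cdot)\|_{L^1L^2}$, because for a general $F_\la$ function $\square v$ lies only in $\la(X^{-1/2}+L^1L^2)$, not in $L^1L^2$; the paper instead decomposes the high-frequency factor by modulation ($w_\la^{<\mu}$, $w_\la^\nu$, $w_\la^0$) and uses a commutator bound only for the $Y$-part $w_\la^0$, where \eqref{w0} supplies the missing $L^1L^2$ control.

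Second, the quantitative mechanism by which the weight $r$ is converted into exactly one derivative is not your global Strauss inequality but the \emph{dyadic} radial estimate $\|r^{(n-1)/2}S_\la(\na)\fy\|_{L^\I}\lec\la^{1/2}\|\fy\|_{L^2}$, interpolated to \eqref{pq} and combined with the $F_\la$ Strichartz bound, yielding $\mu\|rv_\mu\|_{L^\I_{t,x}}+\mu^{3/2}\|rv_\mu\|_{L^2L^\I}\lec\mu^{n/2}\|v_\mu\|_{F_\mu}$, i.e.\ \eqref{rv}: attaching $r$ to the \emph{low-frequency} factor buys precisely the factor $\mu^{n/2-1}$ that makes $rZ$ an algebra over $Z$. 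The restriction $n\ge5$ enters through the admissibility condition $\al=\frac{2(n-1)}{n-3}\le n-1$ in the $L^2L^\I$ bound of \eqref{rv}, not through ``summability of $r^{-(n-4)/2}$ over dyadic spatial scales''; that heuristic is not even consistent near $r=0$, where the leftover weight $r^{-(n-4)/2}$ is singular and the dyadic sum diverges, and the crude exponent $(n-2)/2$ from Strauss overshoots the single power of $r$ you actually need to absorb, so the bookkeeping does not close. Likewise your appeal to an ``$r$-weighted null-form estimate'' for $rQ(v,w)$ presupposes precisely the weighted bilinear estimates the lemma is supposed to provide. In short: the reduction of \eqref{D prod} is sound and close to the paper, but for \eqref{alg rZ} the proposal lacks the two key devices (multiplication by $x$ preserving Fourier support, and the sharp dyadic radial gain \eqref{rv} placed on the low-frequency factor together with the modulation splitting of the high one), so the central estimate is not actually proved.
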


\eqref{D prod} together with \eqref{F}  implies that $F\cap|\na|F$ is a Banach algebra, while the same is true for $rZ$, due to \eqref{alg rZ}. Therefore, for small $\|v\|_ Z$, we have that both $\sin u$ and the series in $j$ from  \eqref{exp N_4} are bounded in $rZ$, while the series in $k$ is bounded in $F\cap|\na|F$. It follows that $N_4$ is bounded in $\square F\cap|\na|\square F$, which completes the contraction estimate in $Z$ (i.e., \eqref{Z}) and so the proof of Theorem \ref{tsk}. Hence, we are left with
 
\begin{proof}[Proof of Lemma \ref{alg for u}]
In order to gain the $r^{-1}$ decay in \eqref{alg rZ}, we employ the radial Sobolev inequality in the sharp dyadic form. Using stationary phase estimates, we obtain 
\EQ{
 \|r^{(n-1)/2}S_\la(\na)\fy\|_{L^\I} \lec \la^{1/2}\|\fy\|_{L^2},}
for any radial $\fy\in L^2_x$, where $S_\la(\na)$ is the dyadic decomposition in $x$ used in \eqref{B}. This implies, after relying on interpolation and Sobolev embeddings, that 
\EQ{\label{pq}
 \|r^{\al(1/p-1/q)}S_\la(\na)\fy\|_{L^q} \lec \la^{(n-\al)(1/p-1/q)} \|\fy\|_{L^p}}
holds for all radial functions $\fy\in L^p_x$, $0\le\al\le n-1$, and $2\le p\le q$. 
Next, if one combines the Strichartz estimates in $F_\la$ (see Theorem 4 in \cite{T}) 
\EQ{
 \|v\|_{L^\I L^2}+\la^{-\frac{n+1}{2(n-1)}}\|v\|_{L^2 L^{\frac{2(n-1)}{n-3}}} \lec \|v\|_{F_\la}}
with \eqref{pq}, it follows that
\EQ{\label{rv}
 \la\|rv\|_{L^\I_{t,x}} + \la^{3/2}\|rv\|_{L^2 L^\I} \lec \la^{n/2}\|v\|_{F_\la}.}
We note that, in bounding the $L^2 L^\I$ norm, we need $n\ge 5$, as one applies \eqref{pq} for $\al= \frac{2(n-1)}{n-3} \le n-1$.

The rest of the proof follows in the same spirit with the one in \cite{T} for \eqref{F}. Since all spaces use $\ell^1$ summability over the dyadic decomposition, it suffices to prove our estimates for single dyadic pieces, which are denoted $v_\mu=A_\mu(D)v$ and $w_\la=A_\la(D)w$,  for $\mu,\la\in 2^\Z$.

In proving \eqref{alg rZ}, we may assume that $\mu\le\la$ by symmetry. We use the decomposition 
\EQ{w_\la=w_\la^{<\mu}+w_\la^{>\mu}, \qquad w_\la^{<\mu} = \ti B_\mu(D)w_\la.}
The support of $\F(v_\mu w_\la^{<\mu})$ is contained in the region
\EQ{
 |\t|+|\x| \lec \la, \qquad ||\t|^2-|\x|^2| \lec \mu\la,}
which is not changed by multiplication with $x$. If $v$ is radial, then \eqref{rv} implies
\EQ{
 \|xv_\mu w_\la^{<\mu}\|_{L^2_{t,x}}
 \lec \|rv_\mu\|_{L^2 L^\I} \|w_\la^{<\mu}\|_{L^\I L^2} 
 \lec \mu^{(n-3)/2}\|v_\mu\|_{F_\mu} \|w_\la\|_{F_\la},}
where we have relied also on the fact that $\ti B_\mu(D)A_\la(D)$ is bounded on $L^\I L^2$ (see the proof of Theorem 3 in \cite{T}). Using the above information on the Fourier support, we deduce 
\EQ{\label{x1}
 \|xv_\mu w_\la^{<\mu}\|_{X^{1/2}} \lec \mu^{n/2-1}\|v_\mu\|_{F_\mu} \|w_\la\|_{F_\la}.}

The other component, $w_\la^{>\mu}$, is nonzero only if $\mu\ll\la$. Therefore, if we multiply it by $v_\mu$, this does not essentially change the Fourier distance from $(0,0)$ and from the light cone of $w_\la^{>\mu}$. We may further decompose 
\EQ{
 w_\la^{>\mu} = \sum_{2^{4}\mu\,<\,\nu\,\le\, \la} w_\la^\nu + w_\la^0,}
such that $w_\la^\nu$ is supported in $||\ta|^2-|\x|^2|\sim \nu\la$ and 
\EQ{
 \|w_\la^{>\mu}\|_{F_\la} \sim \sum_\nu \|w_\la^\nu\|_{X^{1/2}} + \|w_\la^0\|_{Y_\la}.}
Again, by Theorem 3 in \cite{T}, we have 
\EQ{\label{w0}
 \|w_\la^0\|_{L^1L^2} \lec \mu^{-1}\|w_\la^0\|_{Y_\la}.}
The $X$ component of $xv_\mu w_\la^{>\mu}$ is estimated using \eqref{rv} by 
\EQ{\label{x2}
 \|xv_\mu \sum_\nu w_\la^\nu\|_{X^{1/2}} 
 \pt\lec \sum_\nu \nu^{1/2}\|rv_\mu w_\la^\nu\|_{L^2_{t,x}} 
 \pn\lec \|rv_\mu\|_{L^\I_{t,x}} \sum_\nu \nu^{1/2}\|w_\la^\nu\|_{L^2_{t,x}} 
 \pr\lec \mu^{n/2-1}\|v_\mu\|_{F_\mu}\|w_\la\|_{F_\la}.}
The $Y$ component is bounded by 
\EQ{\label{x3}
\|xv_\mu w_\la^0\|_{Y_\la}
 \pt\lec \|rv_\mu\|_{L^\I_{t,x}}\|w_\la^0\|_{L^\I L^2}
 + \la^{-1}\|\square(xv_\mu w_\la^0)\|_{L^1L^2}
 \pr\lec \mu^{n/2-1}\|v_\mu\|_{F_\mu}\|w_\la^0\|_{Y_\la} + \la^{-1}\|\square(xv_\mu w_\la^0)-xv_\mu\square w_\la^0\|_{L^1L^2},}
where the last term is controlled, due to \eqref{rv} and \eqref{w0}, by 
\EQ{
 \la^{-1} \mu\la \|rv_\mu\|_{L^\I_{t,x}} \|w_\la^0\|_{L^1L^2}
 \lec \mu^{n/2-1}\|v_\mu\|_{F_\mu}\|w_\la^0\|_{Y_\la}.}

Putting together \eqref{x1}, \eqref{x2}, and \eqref{x3}, we obtain 
\EQ{
 \|xv_\mu w_\la\|_{F\cap|\na|F} \lec \mu^{n/2-1}(\la^{n/2}+\la^{n/2-1})\|v_\mu\|_{F_\mu} \|w_\la\|_{F_\la},}
whose summation over $\mu,\la$ finishes the proof of \eqref{alg rZ}. 

For \eqref{D prod}, it is enough to check the high-high interaction $\mu\sim\la$, since otherwise the proof would follow from the corresponding versions without $|\na|$, already proved in \cite{T}. 

If $\mu\sim\la$, the support of $\F(v_\mu w_\la)$ is in the region $|\t|+|\x|\lec \mu$, and, as before, we deduce
\EQ{
 \|v_\mu w_\la\|_{L^2_{t,x}} \lec \|v_\mu\|_{L^2L^\I}\|w_\la\|_{L^\I L^2} \lec \mu^{-n/2+1/2}\cdot\mu^{n/2-1}\|v_\mu\|_{F_\mu} \la^{n/2}\|w_\la\|_{F_\la},}
which is bounded in $\ell^1_{\nu\lec\mu}(\nu^{-n/2+1}X^{1/2}_\nu)$. Similarly, using the  $\square F_\la\subset\la^{3/2}L^2_{t,x}$ embedding, we obtain 
\EQ{
 \|v_\mu w_\la\|_{L^1L^2} \lec \|v_\mu\|_{L^2L^\I} \|w_\la\|_{L^2_{t,x}}
 \pt\lec \mu^{n/2-1/2}\la^{3/2}\|v_\mu\|_{F_\mu}\|w_\la\|_{\square F_\la}
 \pr\lec \mu^{-n/2+2}\cdot \mu^{n/2-1}\|v_\mu\|_{F_\mu} \la^{n/2}\|w_\la\|_{\square F_\la},}
which is bounded in $\ell^1_{\nu\lec\mu}(\nu^{-n/2+2}(L^1L^2)_\nu)$. Thus, we conclude the proof of \eqref{D prod}. 
\end{proof}

\section*{Acknowledgements}
The first two authors would like to thank the Mathematisches Forschungsinstitut in Oberwolfach for the hospitality in fall 2010, where part of this project was completed. The first author was supported in part by the National Science Foundation Career grant DMS-0747656. The third author was supported in part by the Department of Energy contract DE-FG02-91ER40685.

\end{document}